\def\phi{\varphi}
\def \P {\mathbb P}
\begin{document}

\newtheorem{lem}{Lemma}
\newtheorem{predl}{Proposition}
\newtheorem{theorem}{Theorem}
\newtheorem{defin}{Definition}
\newtheorem{zam}{Remark}

\bigskip
\centerline{\bf ON ASYMPTOTIC EFFICIENCY OF GOODNESS-OF-FIT TESTS }
\centerline{\bf FOR THE PARETO DISTRIBUTION}
\centerline{\bf BASED ON ITS CHARACTERIZATION}

\bigskip

\centerline{Volkova K. Yu.\footnote{Research supported by grant RFBR No. 13-01-00172, grant NSh No. 2504.2014.1 and by SPbGU grant No. 6.38.672.2013}}

\bigskip
\centerline{Saint-Petersburg State University, Russia}

\begin{abstract}
We introduce a new characterization of Pareto distribution and construct integral
and supremum type goodness-of-fit tests based on it. Limiting distribution and large deviations
of new statistics are described and their local Bahadur efficiency for
parametric alternatives is calculated. Conditions of local optimality of new
statistics are given.
\end{abstract}

\textbf{Key words:} Pareto distribution; $U$-statistics;
characterization; Bahadur efficiency; goodness-of-fit test.

\textbf{MSC (2010):} 60F10, 62G10, 62G20, 62G30.

 \section{ Introduction}
\bigskip

Let ${\cal P}$ be the family of Pareto distributions with the distribution function (d.f.)
\begin{equation}
\label{DF}
F(x) = 1-x^{-\lambda}, \ x \geq 1, \  \lambda>0.
\end{equation}

In this paper we develop the goodness-of-fit tests for Pareto distribution using a new characterization based on the property of order statistics.
The problem formulation is as follows: let $X_1,\ldots,X_n$ be positive i.i.d. rv's with continuous d.f. $F.$
Consider testing the composite hypothesis $H_0: F \in {\cal P}$ against the general
alternative $H_1: F  \notin {\cal P},$ assuming that the alternative d.f. is also concentrated on $[1,\infty).$

It is well known that the log-transform of a Pareto random variable has an
exponential distribution. Therefore the tests of exponentiality are used by many authors to test the {\it Paretianity} of the sample.
Our approach for this problem is unlike and uses  directly  the initial Pareto sample.

The goodness-of-fit tests  for the Pareto distribution have been discussed in \cite{BWG}, \cite{Gul}, \cite{Mart}, \cite{Riz}. We
exploit the different idea for constructing and analyzing statistical tests based on characterization by the property of equidistribution of linear statistics by means of so-called $U$-empirical d.f.'s, see \cite{Jan}, \cite{Kor}.
This method was developed early in several articles, particularly, in \cite{Niknik}, \cite{NiPe}, \cite{NikTchir}, \cite{NikVol}, \cite{NikVol_PR}, \cite{Litv2}.
The tests for the Pareto distribution using this approach were obtained and analyzed in \cite{Boj}.
One can observe that the new tests based on characterizations have reasonably high efficiencies and can be competitive with previously known goodness-of-fit tests. Let us explain our approach.

We will say that the d.f. $F$ belongs to the class of distributions $\cal F$,
if  $\forall x_1, x_2:$ either $F(x_1x_2)\leq F(x_1)F(x_2)$ or $F(x_1x_2)\geq F(x_1)F(x_2),$ see
\cite{Ah1}.

Let $X_1,\ldots,X_n$ be i.i.d. positive absolutely continuous random variables with d.f. $F$ from
the class $\cal F.$
Denote by $X_{(1,n)}\leq X_{(2,n)}\leq\ldots \leq X_{(n,n)}$ - the order statistics of a random sample $X_1,...,X_n.$

We present a new characterization within the class $\cal F$.
\begin{theorem} Let $X_1,...,X_k$ be i.i.d., positive and bounded random variable having an absolutely continuous (with respect to Lebesgue measure) d.f. $F(x).$
Then the equality in law of $X_1$ and $X_{(k,k)}/X_{(k-1,k)}$ takes place iff  $X_1$ has some d.f. from the family $\cal P$.
\end{theorem}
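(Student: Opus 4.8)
The plan is to turn the multiplicative statement into an additive one by passing to logarithms, and then to recognise the resulting object as the \emph{top spacing} of an i.i.d.\ sample, whose equidistribution with the parent variable characterises the exponential law. Since $X_{(k,k)}\ge X_{(k-1,k)}$, the ratio $R:=X_{(k,k)}/X_{(k-1,k)}$ takes values in $[1,\infty)$; hence $X_1\stackrel{d}{=}R$ can hold only if $F$ is concentrated on $[1,\infty)$, which is exactly the support of $\cal P$, so I read ``positive and bounded'' as bounded below by $1$. Setting $Y_i=\log X_i\ge 0$, with d.f.\ $H(y)=F(e^y)$ and density $h$, the ratio becomes the top spacing $\log R=Y_{(k,k)}-Y_{(k-1,k)}$, while $X_1$ is Pareto with parameter $\lambda$ iff $Y_1$ is exponential with the same parameter. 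Thus it suffices to prove: for i.i.d.\ $Y_1,\dots,Y_k\ge 0$ with absolutely continuous d.f., $Y_1\stackrel{d}{=}Y_{(k,k)}-Y_{(k-1,k)}$ iff $Y_1$ is exponential.

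For the ``if'' direction I would invoke R\'enyi's representation of exponential order statistics: if $Y_1,\dots,Y_k$ are i.i.d.\ $\mathrm{Exp}(\lambda)$, the normalised spacings $(k-i+1)(Y_{(i,k)}-Y_{(i-1,k)})$ are i.i.d.\ $\mathrm{Exp}(\lambda)$; taking $i=k$ gives $Y_{(k,k)}-Y_{(k-1,k)}\sim\mathrm{Exp}(\lambda)\stackrel{d}{=}Y_1$, and exponentiating returns $R\stackrel{d}{=}X_1$ for Pareto $X_1$. Equivalently one may just substitute $F(x)=1-x^{-\lambda}$ into the formula for the law of $R$ below and verify the identity by a direct Beta-integral computation.

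The substance is the converse. Starting from the joint density of the two top order statistics,
\[
f_{X_{(k-1,k)},X_{(k,k)}}(u,v)=k(k-1)\,F(u)^{k-2}f(u)f(v),\qquad 1\le u\le v,
\]
the law of $R$ is $P(R\le r)=\int_1^\infty k(k-1)F(u)^{k-2}f(u)\big(F(ru)-F(u)\big)\,du$, and the hypothesis $P(R\le r)=F(r)$ is the master integral equation. Writing $k(k-1)F^{k-2}f=k\,(F^{k-1})'$, integrating by parts (the boundary terms vanish because $F(1)=0$ and $F(ru)-F(u)\to 0$), and using $\int_1^\infty kF(u)^{k-1}f(u)\,du=1$, the equation collapses to
\[
1-F(r)=k\int_r^\infty F(s/r)^{k-1}f(s)\,ds,\qquad r\ge 1,
\]
which in the logarithmic variable reads $1-H(y)=k\int_0^\infty H(w)^{k-1}h(w+y)\,dw$ for all $y\ge 0$. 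I would then show that the only absolutely continuous solution is $H(y)=1-e^{-\lambda y}$. Differentiating in $y$ yields the companion relation $h(y)=k(k-1)\int_0^\infty H(w)^{k-2}h(w)h(w+y)\,dw$, and the scale invariance of the first form suggests either a Mellin transform in $r$ or a Laplace transform in $y$ as the vehicle for extracting exponentiality.

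The main obstacle is precisely this converse: the integral equation is nonlinear in $H$ through the power $H^{k-1}$, so linear uniqueness theorems do not apply off the shelf. For $k=2$ the statement reduces to the classical Puri--Rubin characterisation of the exponential law by $Y_1\stackrel{d}{=}|Y_1-Y_2|$, and for general $k$ my plan is to reduce to this case (or to the equivalent spacings characterisations of Galambos--Kotz and Arnold--Balakrishnan--Nagaraja) by conditioning on $Y_{(k-1,k)}$: given $Y_{(k-1,k)}=m$, the largest observation is distributed as $Y_1$ conditioned on $\{Y_1>m\}$, so the top spacing has survival function $E\big[(1-H)(M+d)/(1-H)(M)\big]$ with $M=Y_{(k-1,k)}$. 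Equating this to $1-H(d)$ for every $d\ge 0$ and extracting the memoryless identity $(1-H)(m+d)=(1-H)(m)\,(1-H)(d)$ is the technical heart; I would carry it out by a Laplace-transform argument in $d$, using the full support of $M$ on $[0,\infty)$ and the analyticity of the transforms afforded by the finiteness of all moments under the boundedness hypothesis. This forces $H(y)=1-e^{-\lambda y}$, and translating back through $Y=\log X$ returns $F\in\cal P$, completing the characterisation.
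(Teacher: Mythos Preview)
Your first move---passing to $Y=\log X$ and reducing the question to whether $Y_1\stackrel{d}{=}Y_{(k,k)}-Y_{(k-1,k)}$ characterises the exponential law---is exactly what the paper does. The paper then finishes in one line by citing Ahsanullah's 1977 characterisation: within the NBU/NWU class, equality in law of $Y_1$ and the top spacing forces exponentiality. The class $\cal F$ introduced just above the theorem (either $F(x_1x_2)\le F(x_1)F(x_2)$ for all $x_1,x_2$, or the reverse inequality) is an implicit hypothesis of the theorem, and under the log-transform it becomes precisely the NBU/NWU assumption that Ahsanullah needs. So the paper's proof is a two-line reduction to a known result, and the class $\cal F$ hypothesis is load-bearing.

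You, by contrast, discard the $\cal F$ assumption and attempt the converse in full generality. Your conditioning argument is correct up to the identity
\[
E\!\left[\frac{\bar H(M+d)}{\bar H(M)}\right]=\bar H(d)\qquad\text{for all }d\ge 0,\quad M=Y_{(k-1,k)},
\]
but the step ``extracting the memoryless identity $\bar H(m+d)=\bar H(m)\bar H(d)$'' from this averaged equation is a genuine gap. An expectation equal to $\bar H(d)$ does not force the integrand to equal $\bar H(d)$ pointwise; that is exactly where NBU/NWU enters. Under NBU one has $\bar H(m+d)/\bar H(m)\le \bar H(d)$ for every $m$ (and the reverse under NWU), so equality of the expectation with $\bar H(d)$ forces equality $M$-a.e., hence for all $m$ by continuity---this is Ahsanullah's argument. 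Your proposed Laplace-transform route is not spelled out, and the appeal to ``finiteness of all moments under the boundedness hypothesis'' is inconsistent with your own reading of ``bounded'' as ``bounded below by $1$'': the Pareto law is unbounded above and need not have even a first moment. In short, the paper's proof is complete because it uses the $\cal F$/NBU--NWU hypothesis and invokes the matching characterisation; your proposal omits that hypothesis and does not supply a replacement for the missing monotonicity step.
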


\begin{proof}
Let $Y=\ln{X}$ and let $G$ denote the d.f. of $Y.$ It can be easily seen that $F \in \cal F$ iff $G$ is NBU ("new better than used") or NWU ("new worse than used") (see \cite{Ah2}).
Further, since we use the monotonic transformation, then $X_1$ and $X_{(k,k)}/X_{(k-1,k)}$ will be identically distributed iff $Y_1$ and $Y_{(k,k)}-Y_{(k-1,k)}$ are identically distributed. It follows from \cite{Ah2} that
$X_1$ and $X_{(k,k)}/X_{(k-1,k)}$ are identically distributed iff $Y=\ln{X}$ has the exponential distribution with some scale parameter $\lambda,$ therefore $X_1$ has the Pareto distribution with the same parameter $\lambda.$
\end{proof}

In the case when $k=2$ our characterization coincide with another characterization of Pareto distribution considered in \cite{Boj}, see also \cite{NikVol_PR}.
Note that our characterization extend the charaterization, involved in \cite{Boj}.

According to our characterization we construct
the $U$-empirical d.f. by the formulae
\begin{gather*}
H_n(t)={n \choose
k}^{-1}\sum_{1 \leq i_1<\ldots < i_k  \leq n}\textbf{1}\{X_{(k,\{i_1,\ldots,i_k\})}/X_{(k-1,\{i_1,\ldots,i_k\})}< t\}, \quad t\geq 1,
\end{gather*}
where  $X_{(s,\{i_1,\ldots,i_k\})}, \, s=\{k-1,k\}$ \ denotes the $s-$th
order statistic of the subsample $X_{i_1},\ldots,X_{i_k}$. For rv $X$ the $U-$statistical d.f. will be simply the usual empirical
d.f. $F_n(t)=n^{-1}\sum_{i=1}^n\textbf{1}(X_i<t), t \in R^1,$
based on the observations $X_1,\dots,X_n.$

It is known that the properties of $U$-empirical d.f.'s are similar to
the properties of usual empirical d.f.'s, see \cite{HJS}, \cite{Jan}. Hence the difference $H_n- F_n$ for large $n$  should be almost surely  close to zero
 under $H_0,$ and we can measure their closeness by using some test statistics, assuming their large values to be critical.

We suggest two test statistics
\begin{align}
I_n^{(k)}&=\int_{1}^{\infty} \left(H_n(t)-F_n(t)\right)dF_n(t),\label{I_n}\\
D_n^{(k)}&=\sup_{t \geq 1}\mid H_n(t)-F_n(t)\mid.\label{D_n}
\end{align}
Note that both proposed statistics under $H_0$ are invariant with respect to the change of variables
$ X \to X^{\frac{1}{\lambda}},$ so we may set $\lambda =1.$

We discuss their limiting distributions under the null hypothesis and find logarithmic asymptotics of large deviations under $H_0.$  Next we calculate
their efficiencies against some parametric alternatives from the class $\cal F.$

Finally, we study the conditions of local optimality of our tests and describe the
"most favorable" \, alternatives for them.

\bigskip

 \section{ Integral statistic  $I_{n}^{(k)}$}

\bigskip

The statistic $I_{n}^{(k)}$ is asymptotically equivalent to the $U$-statistic of degree  $(k+1)$  with
the centered kernel
\begin{gather*}
\Psi_k(X_{i_1},\ldots, X_{i_{k+1}})=
\frac{1}{k+1}\sum_{\pi(i_1, \ldots,
i_{k+1})}\textbf{1}(X_{(k,\{i_1,\ldots,i_k\})}/X_{(k-1,\{i_1,\ldots,i_k\})}  < X_{i_{k+1}})
 -\frac12,
\end{gather*}
where $\pi(i_1, \ldots, i_{k+1})$ means all permutations of different indices  from $\{i_1, \ldots,i_{k+1}\}.$

Let $X_1,\ldots, X_{k+1}$ be independent rv's from standard Pareto distribution. It is known that non-degenerate $U$-statistics
are asymptotically normal, see \cite{Hoeffding}, \cite{Kor}. To prove that
 the kernel $\Psi_k(X_{1},\ldots, X_{{k+1}})$ is non-degenerate, we calculate its projection $\psi_k(s).$ For a fixed $X_{{k+1}}=s, \, s\geq 1$ we have:
\begin{multline*}
\psi_k(s) := E(\Psi_k(X_{1},\ldots, X_{{k+1}})\mid X_{{k+1}}=s) =\\
=\frac{k}{k+1}\P(X_{(k,\{2,\ldots, k, s\})}/X_{(k-1,\{2,\ldots, k, s\})} < X_{1})
+\frac{1}{k+1}\P(X_{(k,\{1,\ldots,k\})}/X_{(k-1,\{1,\ldots, k\})}< s)-\frac12.
\end{multline*}

It follows from the above characterization that the second probability is equal to:
$$
\P(X_{k,\{1,\ldots, k\}}/X_{k-1,\{1,\ldots, k\}}< s)=\P (X_1<s) = F(s).
$$

It remains to calculate the first term. For this purpose we decompose the probability as
$\P(X_{k,\{2,\ldots, k,s \}}/X_{k-1,\{2,\ldots, k,s\}} < X_{1}) = \P_1+\P_2+\P_3,$
where $\P_i, i={1,2,3}$ are initial probabilities, computed in one of the following cases:
\begin{itemize}
\item[(1)] Let the sample units take places as follows: $ X_2 < \ldots < X_k <s.$ Then our probability transforms into
\begin{multline*}
\P_1=(k-1)! \, \P(\frac{s}{X_k} <X_1, X_2 < \ldots < X_k <s)=\\
=(k-1)! \, \P(X_k< s, X_1 > \frac{s}{X_k} , X_2 < X_3, X_3 <X_4, \ldots, X_{k-1} < X_{k}).
\end{multline*}

After some calculations we obtain that the last probability is equal to:
\begin{multline*}
(k-1)! \int_1^s (1-F(\frac{s}{x_k}))\frac{F^{k-2}(x_k)}{(k-2)!} d F(x_k) =
F^{k-1}(s)-(k-1) \int_1^s (1-\frac{1}{x})^{k-2} (1-\frac{x}{s}) \frac{dx}{x^2}.
\end{multline*}

The integral in the second term can be evaluated using integration by parts and binomial representation of the function $(1-\frac1x)^{k-1}.$ Finally we have:
\begin{multline*}
\int_1^s (1-\frac{1}{x})^{k-2} (1-\frac{x}{s}) \frac{dx}{x^2} = \frac{1}{s(k-1)}\int_{1}^{s}\sum_{j=0}^{k-1}(-1)^j {{k-1} \choose j}x^{-j} dx =\\
= \frac{1}{s(k-1)}\left(s-1-(k-1)\ln{(s)}+\sum_{j=2}^{k-1}(-1)^j {{k-1} \choose j}\frac{1-s^{-(j-1)}}{j-1}\right).
\end{multline*}

Thus the initial probability in this case is equal to
\begin{gather*}
\P_1=F^{k-1}(s)-F(s)+(k-1)\frac{\ln{s}}{s}-\frac{1}{s}\sum_{j=2}^{k-1}(-1)^j {{k-1} \choose j}\frac{1-s^{-(j-1)}}{j-1}.
\end{gather*}

\item[(2)] The sample units are $X_2 < X_3<\ldots X_{k-1}< s < X_k,$ then for this case we have:
\begin{align*}
\P_2=&(k-1)! \, \P(\frac{X_k}{s} <X_1, X_2 < X_3<\ldots X_{k-1}< s < X_k)=\\
=&(k-1)! \, \P(X_k> s, X_1 > \frac{X_k}{s} , X_2 < X_3, X_3 < X_4, \ldots, X_{k-1}<s)=\\
=&(k-1)! \int_s^{\infty} (1-F(\frac{x_k}{s}))\frac{F^{k-2}(s)}{(k-2)!} d F(x_k) =\\
=&\frac{(k-1)}{2s}F^{k-2}(s).
\end{align*}

\item[(3)] The last case we consider is when $s$ is situated on $j-$th place ($1 \leq j \leq {k-2}$) in variational series of the sample
$X_2, \ldots, X_{k-2}.$ It means that the sample units take places as follows: $ X_2<  \ldots < s < \ldots <X_{k-2}< X_{k-1} < X_k$
and $s$ also may stand on first and $(k-2)$-th places. Then the required probability is equal to
\begin{multline*}
\P_3=(k-1)! \, \P(\frac{X_k}{X_{k-1}} <X_1, X_2<  \ldots < s < \ldots <X_{k-2}< X_{k-1} < X_k)=\\
=\frac12 C_{k-1}^{j-1}(1-F(s))^{k-j} F^{j-1}(s), \, 1 \leq j \leq {k-2}.
\end{multline*}

\end{itemize}

Combining the results we get that the first term in the projection has the form:
\begin{multline*}
\P(X_{(k,\{ 2,\ldots, k, s\})}/X_{(k-1,\{2,\ldots, k, s\})} < X_{1}) =
F^{k-1}(s)-F(s)+(k-1)\frac{\ln{s}}{s}-\\-\frac{1}{s}\sum_{j=2}^{k-1}(-1)^j {{k-1} \choose j}\frac{1-s^{-(j-1)}}{j-1}+\frac12 \sum_{j=1}^{k-1}C_{k-1}^{j-1}(1-F(s))^{k-j} F^{j-1}(s).
\end{multline*}

Note that the last sum is equal to $\sum_{j=1}^{k-1}C_{k-1}^{j-1}(1-F(s))^{k-j} F^{j-1}(s)=1-F^{k-1}(s).$
Thus for the initial probability we get the result:
\begin{multline*}
\P(X_{(k,\{ 2,\ldots, k, s\})}/X_{(k-1,\{2,\ldots, k, s\})} < X_{1}) =
\frac12 F^{k-1}(s) -\\-F(s)+(k-1)\frac{\ln{s}}{s}-\frac{1}{s}\sum_{j=2}^{k-1}(-1)^j {{k-1} \choose j}\frac{1-s^{-(j-1)}}{j-1}+\frac12.
\end{multline*}

Hence we get the final expression for the projection of the kernel $\Psi_k:$
\begin{align}
\psi_k(s) =\frac{kF^{k-1}(s)-1}{2(k+1)}-&\frac{k-1}{k+1}F(s)+\frac{k(k-1)}{k+1}\frac{\ln{s}}{s} - \notag
\\ - &\frac{k}{s(k+1)} \sum_{j=2}^{k-1}(-1)^j {{k-1} \choose j}\frac{1-s^{-(j-1)}}{j-1}.\label{psi_k}
\end{align}

The calculation of this variance for the projection $\psi_k$ in the general case  is too complicated, therefore we calculate it only for particular $k$.

\subsection{ Integral statistic $I_{n}^{(3)}$}

The projection $\psi_k(s)$ for case $k=3$ has the form:
\begin{equation}
\label{psi_3}
\psi_3(s)=\frac{9}{8s^2}+\frac{3\ln{s}}{2s}-\frac1s-\frac14.
\end{equation}

The variance of this projection $\Delta_3^2 = E\psi_3^2(X_1)$ under $H_{0}$
is  given by
\begin{gather*}
\Delta_3^2 = \int_{1}^{\infty} \psi_3^2 (s) \frac{1}{s^2}ds =\frac{11}{1920}
\approx 0.0057.
\end{gather*}

Therefore the kernel $\Psi_3$ is centered and non-degenerate.
We can apply Hoeffding's theorem on asymptotic normality of $U$-statistics, see again \cite{Hoeffding}, \cite{Kor},
which implies that the following result holds
\begin{theorem}
Under null hypothesis as $n \rightarrow \infty$ the statistic $\sqrt{n}I_{n}^{(3)}$ is asymptotically normal
so that
$$\sqrt{n}I_{n}^{(3)} \stackrel{d}{\longrightarrow}{\cal
{N}}(0,\frac{11}{120}).$$
\end{theorem}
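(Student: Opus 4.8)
The plan is to realize $\sqrt{n}\,I_n^{(3)}$ as (an asymptotically equivalent version of) a non-degenerate $U$-statistic and then invoke Hoeffding's classical central limit theorem. First I would recall, from the discussion immediately preceding the statement, that $I_n^{(k)} = \int_1^\infty (H_n(t)-F_n(t))\,dF_n(t)$ is asymptotically equivalent (up to terms of order $o_P(n^{-1/2})$) to the $U$-statistic $U_n$ of degree $k+1=4$ with the symmetric centered kernel $\Psi_3(X_{i_1},\dots,X_{i_4})$ displayed above; this is the standard manipulation of writing the integral against $dF_n$ as an average of indicators over $(k+1)$-tuples and symmetrizing, and the negligible remainder comes from the diagonal terms (repeated indices), of which there are $O(n^{k})$ out of $O(n^{k+1})$, hence contributing $O(n^{-1})$ to the statistic. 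So it suffices to prove $\sqrt{n}\,U_n \stackrel{d}{\to} \mathcal N(0,\sigma^2)$ with the right $\sigma^2$.

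Next I would verify the hypotheses of Hoeffding's theorem for $U_n$. The kernel $\Psi_3$ is bounded (it is built from indicator functions minus $\tfrac12$), so all moment conditions are trivially satisfied. It is centered under $H_0$: by the characterization (Theorem~1 with $k=3$, i.e. the $k=2$ case of the ratio statistic applied inside), $E\Psi_3 = 0$ under the standard Pareto law, as was used when computing the projection. The crucial point is non-degeneracy: by Hoeffding's theorem the limiting variance of $\sqrt{n}\,U_n$ is $(k+1)^2\Delta_k^2 = 16\,\Delta_3^2$, where $\Delta_3^2 = E\psi_3^2(X_1)$ and $\psi_3$ is the first projection already computed in~(\ref{psi_3}). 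So I would cite the formula $\psi_3(s) = \tfrac{9}{8s^2} + \tfrac{3\ln s}{2s} - \tfrac1s - \tfrac14$ derived above, and then record the variance computation
\[
\Delta_3^2 \;=\; \int_1^\infty \psi_3^2(s)\,\frac{ds}{s^2} \;=\; \frac{11}{1920} \;>\;0,
\]
which shows the kernel is non-degenerate and pins down the constant. The main (and essentially only) obstacle is this integral evaluation: it is an elementary but tedious computation of $\int_1^\infty \psi_3^2(s)s^{-2}\,ds$, expanding the square into monomials of the form $s^{-a}$ and $s^{-a}\ln s$ and integrating term by term using $\int_1^\infty s^{-m}\,ds = \tfrac1{m-1}$ and $\int_1^\infty s^{-m}\ln s\,ds = \tfrac1{(m-1)^2}$; care is needed only to collect the rational coefficients correctly so that the answer simplifies to $\tfrac{11}{1920}$.

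Finally, assembling the pieces: with $k=3$ Hoeffding's CLT gives $\sqrt{n}\,U_n \stackrel{d}{\to}\mathcal N\bigl(0,\,16\,\Delta_3^2\bigr) = \mathcal N\bigl(0,\,16\cdot\tfrac{11}{1920}\bigr) = \mathcal N\bigl(0,\,\tfrac{11}{120}\bigr)$, and since $\sqrt{n}(I_n^{(3)} - U_n) = o_P(1)$, Slutsky's lemma transfers the limit to $\sqrt{n}\,I_n^{(3)}$, which is exactly the claimed statement. I would present the $\Delta_3^2$ computation (or at least its result) first, since it is what makes the theorem quotable, and then close with the one-line invocation of Hoeffding plus the asymptotic-equivalence remark.
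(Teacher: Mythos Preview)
Your proposal is correct and follows essentially the same route as the paper: compute the projection $\psi_3$, evaluate $\Delta_3^2=\int_1^\infty \psi_3^2(s)\,s^{-2}\,ds=\tfrac{11}{1920}>0$ to confirm non-degeneracy, and invoke Hoeffding's CLT for $U$-statistics of degree $k+1=4$ to obtain the limiting variance $16\Delta_3^2=\tfrac{11}{120}$. Your explicit remarks on the asymptotic equivalence $I_n^{(3)}=U_n+o_P(n^{-1/2})$ and the Slutsky step are a bit more detailed than the paper's one-line citation of Hoeffding, but the argument is the same.
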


Now we shall evaluate the large deviation asymptotics of the sequence of statistics $I_{n}^{(3)}$ under $H_0.$
According to the theorem on large deviations of such statistics from \cite{nikiponi}, see also \cite{anirban},
 \cite{Niki10}, we obtain due the fact that the kernel $\Psi_3$ is centered, bounded and non-degenerate
the following result.
\begin{theorem}
For  $a>0$
$$
\lim_{n\to \infty} n^{-1} \ln P ( I_n^{(3)} >a) = - f_I^{(3)}(a),
$$
where the function $f_I^{(3)}$ is continuous for sufficiently small $a>0,$ and $$
f_I^{(3)}(a)  \sim \frac{a^2}{32 \Delta_3^2} = 5.455\, a^2, \, \mbox{ as } \, a \to 0.
$$
\end{theorem}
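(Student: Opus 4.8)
The plan is to invoke the general large-deviation theorem for non-degenerate $U$-statistics with bounded kernels (from \cite{nikiponi}, see also \cite{anirban}, \cite{Niki10}), which directly yields both the existence of the limit $\lim_{n\to\infty} n^{-1}\ln P(I_n^{(3)}>a)=-f_I^{(3)}(a)$ and the continuity of $f_I^{(3)}$ for small $a>0$. The hypotheses of that theorem have essentially been verified already in the preceding subsection: the statistic $I_n^{(3)}$ is asymptotically equivalent to a $U$-statistic of degree $k+1=4$ with kernel $\Psi_3$, which is centered under $H_0$, bounded (being a difference of indicator averages and a constant, it takes values in a bounded interval), and non-degenerate since $\Delta_3^2 = E\psi_3^2(X_1)=\tfrac{11}{1920}>0$. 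So the first step is simply to check that all structural conditions of the cited theorem are met and state the conclusion.

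Next I would extract the precise asymptotics of $f_I^{(3)}(a)$ as $a\to 0$. The cited large-deviation theorem gives that $f_I^{(3)}(a)$ behaves, to leading order, like the large-deviation rate of the leading linear (Hajek-projection) term of the $U$-statistic, i.e. like the rate function of a normalized sum of i.i.d.\ bounded random variables $\tfrac{m}{?}\,\psi_3(X_i)$, where $m=k+1=4$ is the degree. For such sums the rate function near zero is quadratic with the classical Cramér coefficient $1/(2\sigma^2)$, where $\sigma^2$ is the variance of the relevant summand. Here the degree-$4$ scaling contributes a factor: the Hajek principal part of $I_n^{(3)}$ is $\tfrac{4}{n}\sum_{i=1}^n \psi_3(X_i)$ up to lower-order terms, so the effective variance entering the Cramér bound is $16\Delta_3^2/n$ at the level of $\sqrt{n} I_n^{(3)}$ — equivalently, one gets $f_I^{(3)}(a)\sim a^2/(2\cdot(2(k+1))^2\Delta_3^2/?)$. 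The bookkeeping must be done so that it reproduces the claimed constant $a^2/(32\Delta_3^2)$; with $\Delta_3^2=\tfrac{11}{1920}$ this is $\tfrac{1920}{32\cdot 11}a^2=\tfrac{60}{11}a^2\approx 5.455\,a^2$, consistent with the stated value and with the asymptotic normality constant $\tfrac{11}{120}=16\Delta_3^2$ from the previous theorem (since $32\Delta_3^2=2\cdot 16\Delta_3^2$, the exponent is $a^2/(2\cdot\mathrm{Var})$ as expected for a Gaussian-type tail).

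The main obstacle is getting the degree-dependent constant exactly right: one must be careful that the "variance" in the Cramér-type leading term refers to the variance of the full linear part of the Hoeffding decomposition of the degree-$(k+1)$ $U$-statistic, which is $(k+1)^2\Delta_k^2$ at the scale of $\sqrt n\,I_n^{(k)}$, so that the large-deviation function satisfies $f_I^{(k)}(a)\sim a^2/\bigl(2(k+1)^2\Delta_k^2\bigr)$; for $k=3$ this is $a^2/(32\Delta_3^2)$. I would therefore (i) cite the theorem of \cite{nikiponi} for existence, continuity near $0$, and the quadratic asymptotics with Cramér-type constant; (ii) identify the principal linear part of $\Psi_3$ via its projection $\psi_3$ computed in \eqref{psi_3}; (iii) substitute $\Delta_3^2=\tfrac{11}{1920}$ to obtain $f_I^{(3)}(a)\sim \tfrac{60}{11}a^2 = 5.455\,a^2$. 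The only genuine computation, $\Delta_3^2=\tfrac{11}{1920}$, has already been carried out in the text, so the proof is short.
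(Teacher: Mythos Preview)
Your proposal is correct and follows essentially the same route as the paper: verify that the kernel $\Psi_3$ is centered, bounded, and non-degenerate (since $\Delta_3^2=\tfrac{11}{1920}>0$), then invoke the large-deviation theorem of \cite{nikiponi} for $U$-statistics of degree $k+1=4$, which gives $f_I^{(3)}(a)\sim a^2/\bigl(2(k+1)^2\Delta_3^2\bigr)=a^2/(32\Delta_3^2)$. Your extra bookkeeping connecting the constant $32\Delta_3^2$ to the asymptotic variance $16\Delta_3^2=\tfrac{11}{120}$ from the preceding CLT is a welcome sanity check but not additional content beyond what the paper does.
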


\subsection{Some notions from Bahadur theory}

Suppose that under the alternative $H_1$ the observations have the d.f. $G(\cdot,\theta)$ and the density $g(\cdot,\theta), \ \theta \geq 0,$ such that
$G(\cdot, 0) \in {\cal P}.$
The measure of Bahadur efficiency (BE) for any sequence $\{T_n\}$ of test statistics is the exact slope
$c_{T}(\theta)$ describing the rate of exponential decrease for the
attained level under the alternative d.f. $G(\cdot,\theta).$ According to Bahadur theory  \cite{Bahadur}, \cite{Nik} the exact slopes may be found by
using the following Proposition.

\noindent {\bf Proposition}.\,{\it Suppose that the following two
conditions hold:
\[
\hspace*{-3.5cm} \mbox{a)}\qquad  T_n \
\stackrel{\mbox{\scriptsize $P_\theta$}}{\longrightarrow} \
b(\theta),\qquad \theta > 0,\nonumber \] where $-\infty <
b(\theta) < \infty$, and $\stackrel{\mbox{\scriptsize
$P_\theta$}}{\longrightarrow}$ denotes convergence in probability
under $G(\cdot\ ; \theta)$.
\[
\hspace*{-2cm} \mbox{b)} \qquad \lim_{n\to\infty} n^{-1} \ \ln \
P_{H_0} \left( T_n \ge t \ \right) \ = \ - h(t)\nonumber
\] for any $t$ in an open interval $I,$ on which $h$ is
continuous and $\{b(\theta), \: \theta > 0\}\subset I$. Then
$$c_T(\theta) \ = \ 2 \ h(b(\theta)).$$}

We have already found the large deviation asymptotics. In order to evaluate the exact slope it remains to calculate the first condition of the Proposition.

Note that the exact slopes for any $\theta$ satisfy the inequality (see \cite{Bahadur}, \cite{Nik})
\begin{equation}
\label{Ragav}
c_T(\theta) \leq 2 K(\theta),
\end{equation}
where $K(\theta)$ is the Kullback-Leibler "distance"\, between the alternative and the null-hypothesis $H_0.$
In our case $H_0$ is composite, hence for any alternative density $g_j(x,\theta)$ one has
$$
K_j(\theta) = \inf_{\lambda>0} \int_1^{\infty} \ln [g_j(x,\theta) / \lambda x^{-\lambda-1} ] g_j(x,\theta) \ dx.
$$
This quantity can be easily calculated as $\theta \to 0$ for particular alternatives.
According to (\ref{Ragav}), the local BE of the sequence of statistics ${T_n}$ is defined as
$$
e^B (T) = \lim_{\theta \to 0} \frac{c_T(\theta)}{2K(\theta)}.
$$

\subsection{Local Bahadur efficiency of $I_n^{(3)}$}
According to Bahadur theory, the considered alternatives should be close to null-hypothesis as $\theta \to 0.$ Therefore we suggest three alternatives against Pareto
distribution. The first two alternatives we consider are obtained by skewing mechanism, see \cite{LP}, we call them Ley-Paindaveine alternatives.
\begin{enumerate}

\item[i)] First Ley-Paindaveine alternative with the d.f.
$$ G_1(x,\theta)=F(x)e^{-\theta(1-F(x))},\theta \geq 0, x \geq 1;$$

\item[ii)] Second Ley-Paindaveine alternative with the d.f.
$$ G_2(x,\theta)=F(x)-\theta\sin{\pi F(x)}, \theta \in [0,\pi^{-1}], x\geq 1;$$

\item[iii)] log-Weibull alternative with the d.f.
$$ G_3(x,\theta)=1-e^{-(\ln{x})^{\theta+1}},\theta \in (0,1), x\geq 1.$$
\end{enumerate}

Let us find the local BE for alternative under consideration.

According to the Law of Large Numbers
for $U$-statistics \cite{Kor}, the limit in probability under $H_1$ is equal to
\begin{gather*}
b_1(\theta)=P_{\theta}(X_{(3,3)}/X_{(2,3)}<Y)-\frac12.
\end{gather*}
It is easy to show (see also \cite{Boj}) that
$$b_1(\theta) \sim 4\theta  \int_{1}^{\infty} \psi_3(s)h_1(s)ds,
$$
where $h_1(s)=\frac{\partial}{\partial\theta}g_1(s,\theta)\mid _{\theta=0}$ and $\psi_3(s)$ is the projection from \eqref{psi_3}.
Therefore for the first Ley-Paindaveine alternative we have
\begin{gather*}
b_1(\theta) \sim 4\theta
\int_{1}^{\infty}(\frac{9}{8s^2}+\frac{3\ln{s}}{2s}-\frac1s-\frac14)
(\frac{s-2}{s^3})\frac{ds}{s^2} \sim  \frac{\theta}{12}, \quad \theta \to 0,
\end{gather*}
and the local exact slope of the sequence $I_n^{(3)}$ as $\theta \to 0$ admits the representation
$$c_1(\theta)=b^2_1(\theta)/(16\Delta_3^2) \sim \frac{5}{66}\,\theta^2, \, \theta
\to 0.$$

The Kullback-Leibler "distance" \, $K_1(\theta)$ between the alternative and the null-hypothesis $H_0$ admits
the following asymptotics (see again \cite{Boj}):
\begin{gather*}
2K_1(\theta)\sim \theta^2 \left[\{\int_1^\infty h_1^2(x)x dx -(\int_1^\infty h_1(x) \ln{(x)}dx)^2\right] ,\, \theta \to 0.
\end{gather*}
Therefore in our case
\begin{equation}\label{K1}
K_1(\theta) \sim \theta^2/24, \,\theta \to 0.
\end{equation}
Consequently, the local efficiency of the test is
$$e^B_1(I)=\lim_{\theta \to 0}\frac{c_1(\theta)}{2K_1(\theta)}\approx  \frac{10}{11}\approx 0.909.$$

Omitting the calculations  similar to previous cases, we get for the second Ley-Paindaveine alternative $
b_2(\theta)\sim 0.353\, \theta,$ $c_2(\theta)\sim 1.363\,\theta^2,$ $ \theta \to 0.$ It is easy to show that
$K_2(\theta) \sim 0.753\, \theta^2,$ $\theta \to 0.$ Therefore the local BE is equal to 0.905.

After some calculations in case of the log-Weibull alternative we have:
\begin{gather*}
b_3(\theta) \sim (\frac{3}{4}-\ln{3}+\ln{2})\theta \approx 0.345\, \theta, \quad \theta \to 0,
\end{gather*}
and the local exact slope of the sequence $I_n$ as $\theta \to 0$ admits the representation
$c_3(\theta) \sim 1.295\, \theta^2.$
Moreover for the log-Weibull distribution $K_3(\theta)$ satisfies
$K_3(\theta) \sim \frac{\theta^2}{12},$ $\theta \to 0.$ Hence the local BE for the last case is equal to $ 0.787.$

Next table \ref{fig: tab1} gathers the values of local BE.
\begin{table}[!hhh]\centering
\caption{Local Bahadur efficiency for $I_n^{(3)}$ }
\bigskip
\begin{tabular}{|c|c|}
\hline
Alternative & Efficiency\\
\hline
Ley-Paindaveine 1 &  0.909  \\
Ley-Paindaveine 2 &   0.905 \\
log-Weibull&  0.787  \\
\hline
\end{tabular}
\label{fig: tab1}
\end{table}
\bigskip

\subsection{ Integral statistic $I_{n}^{(4)}$}

For case $k=4$ the projection $\psi_k(s)$ has the form:
\begin{equation}
\label{psi_4}
\psi_4(s)=\frac{12\ln{s}}{5s}-\frac{4}{5s^3}+\frac{18}{5s^2}-\frac{13}{5s}-\frac{3}{10}.
\end{equation}

The variance of this projection under $H_{0}$ is equal to
\begin{gather*}
\Delta_4^2 = \int_{1}^{\infty} \psi_4^2 (s) \frac{1}{s^2} d s =\frac{271}{52500}
\approx 0.00516.
\end{gather*}

Therefore the kernel $\Psi_4$ is centered, non-degenerate and bounded.
Due to Hoeffding's theorem on asymptotic normality of $U$-statistics, see again \cite{Hoeffding}, \cite{Kor},
we have that:
\begin{theorem}
Under null hypothesis as $n \rightarrow \infty$ the statistic $\sqrt{n}I_{n}^{(4)}$ is asymptotically normal
so that
$$\sqrt{n}I_{n}^{(4)} \stackrel{d}{\longrightarrow}{\cal
{N}}(0,\frac{271}{2100}).$$
\end{theorem}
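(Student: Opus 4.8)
The plan is to repeat, \emph{verbatim in structure}, the argument already carried out for $I_n^{(3)}$. First I would invoke the asymptotic equivalence stated at the opening of Section~2: $\sqrt{n}\,I_n^{(4)}$ has the same limiting law as $\sqrt{n}\,U_n$, where $U_n$ is the $U$-statistic of degree $k+1=5$ with kernel $\Psi_4$. Since $\Psi_4$ is centred under $H_0$ by construction, one has $E_{H_0}U_n=0$, so it suffices to establish a central limit theorem for $\sqrt{n}\,U_n$ around $0$.

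Second, I would record the projection $\psi_4$. This is just the specialisation of the general formula \eqref{psi_k} to $k=4$: the sum $\sum_{j=2}^{k-1}$ then runs only over $j=2,3$, and after collecting the elementary terms $F^{k-1}(s)=(1-s^{-1})^{3}$, $F(s)=1-s^{-1}$, $s^{-1}\ln s$ and the two surviving summands, one is led to the closed form \eqref{psi_4}. Third, I would compute the variance of this projection under $H_0$, i.e. $\Delta_4^2=E_{H_0}\psi_4^2(X_1)=\int_1^{\infty}\psi_4^2(s)\,s^{-2}\,ds$, where $X_1$ is standard Pareto with density $s^{-2}$ on $[1,\infty)$. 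Expanding the square turns this into a finite linear combination of integrals of the two elementary types $\int_1^{\infty}s^{-a}\,ds$ and $\int_1^{\infty}s^{-a}\ln s\,ds$ with $a\ge 2$, all convergent and evaluated by the usual formulas $\int_1^\infty s^{-a}ds=(a-1)^{-1}$ and $\int_1^\infty s^{-a}\ln s\,ds=(a-1)^{-2}$. Summing the contributions yields $\Delta_4^2=271/52500>0$.

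Finally, positivity of $\Delta_4^2$ shows that the kernel $\Psi_4$ is non-degenerate; it is obviously bounded, being a symmetrised indicator minus $\tfrac12$, and, as noted, centred. Hoeffding's theorem on asymptotic normality of non-degenerate $U$-statistics (\cite{Hoeffding}, \cite{Kor}) then gives $\sqrt{n}\,U_n\stackrel{d}{\longrightarrow}{\cal N}\bigl(0,(k+1)^2\Delta_4^2\bigr)={\cal N}\bigl(0,25\cdot\tfrac{271}{52500}\bigr)={\cal N}\bigl(0,\tfrac{271}{2100}\bigr)$, and the asymptotic equivalence transfers this limit to $\sqrt{n}\,I_n^{(4)}$. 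The only laborious point is the bookkeeping in the variance integral $\Delta_4^2$; conceptually there is no obstacle, and everything else is an exact copy of the $k=3$ proof.
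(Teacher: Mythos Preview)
Your proposal is correct and follows essentially the same approach as the paper: specialise the projection formula \eqref{psi_k} to $k=4$, compute $\Delta_4^2=\int_1^\infty\psi_4^2(s)\,s^{-2}\,ds=271/52500>0$ to verify non-degeneracy, and apply Hoeffding's theorem to the bounded centred kernel $\Psi_4$ of degree $5$, obtaining asymptotic variance $25\,\Delta_4^2=271/2100$. The paper's argument is no more and no less than this.
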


The large deviation asymptotics of the sequence of statistics $I_{n}^{(4)}$ under $H_0$ follows from the following
result. It was derived using the theorem on large deviations (see again \cite{nikiponi}, \cite{anirban},
 \cite{Niki10}), applied to the centered, bounded and non-degenerate kernel $\Psi_4.$
\begin{theorem}
For  $a>0$
$$
\lim_{n\to \infty} n^{-1} \ln P ( I_n^{(4)} >a) = - f_I^{(4)}(a),
$$
where the function $f_I^{(4)}$ is continuous for sufficiently small $a>0,$ and $$
f_I^{(4)}(a)  \sim \frac{a^2}{50 \Delta^2_4} = 3.875\, a^2, \, \mbox{as } \, a \to 0.
$$
\end{theorem}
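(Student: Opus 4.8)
The plan is to repeat the argument already carried out for $I_n^{(3)}$. First I would use the fact, noted at the beginning of this section, that $I_n^{(4)}$ is asymptotically equivalent to the $U$-statistic $U_n$ of degree $k+1=5$ with the bounded, centered kernel $\Psi_4$. For the purpose of logarithmic large deviations one needs this equivalence in a strong form: expanding $\int_1^\infty\bigl(H_n(t)-F_n(t)\bigr)\,dF_n(t)$ and symmetrising, the genuine $U$-statistic part with kernel $\Psi_4$ is separated from a diagonal contribution produced by the $(k+1)$-tuples in which the integrating observation coincides with one of the $k$ indices forming the ratio statistic; since there are $O(n^{k})$ such terms out of $O(n^{k+1})$ and the kernel is bounded, this contribution is deterministically of order $O(1/n)$. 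Hence $\P(I_n^{(4)}>a)$ and $\P(U_n>a)$ share the same exponential rate for every fixed $a>0$.

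Second, I would verify the hypotheses of the large deviation theorem for non-degenerate $U$-statistics from \cite{nikiponi} (see also \cite{anirban}, \cite{Niki10}). The kernel $\Psi_4$ is bounded, taking values in $[-\tfrac12,\tfrac12]$; it is centered under $H_0$ by construction; and it is non-degenerate, since its first projection $\psi_4$ given in \eqref{psi_4} has the strictly positive variance $\Delta_4^2=271/52500\approx 0.00516$ computed above. These are precisely the requirements of the cited theorem, which therefore yields at once the existence of the limit
$$
\lim_{n\to\infty}n^{-1}\ln\P\bigl(I_n^{(4)}>a\bigr)=-f_I^{(4)}(a),
$$
the continuity of $f_I^{(4)}$ on a right neighbourhood of the origin, and the local expansion $f_I^{(4)}(a)=\dfrac{a^{2}}{2(k+1)^{2}\Delta_4^{2}}\,\bigl(1+o(1)\bigr)$ as $a\to 0$.

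Third, I would substitute $k=4$ and the value of $\Delta_4^2$. Since $2(k+1)^2\Delta_4^2=50\cdot\tfrac{271}{52500}=\tfrac{271}{1050}$, this gives $f_I^{(4)}(a)\sim\dfrac{1050}{271}\,a^{2}=3.875\,a^{2}$ as $a\to 0$, which is the assertion. As a consistency check, $(k+1)^{2}\Delta_4^{2}=271/2100$ is exactly the variance appearing in the central limit theorem for $\sqrt{n}\,I_n^{(4)}$ established above, and $f_I^{(4)}(a)\sim a^{2}/(2\sigma^{2})$ with $\sigma^{2}=271/2100$ matches the corresponding Gaussian tail.

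The step I expect to be the main obstacle is not any of these verifications individually but the bookkeeping in the first one: one must be certain that passing from the $V$-statistic-type quantity $I_n^{(4)}$ to the honest $U$-statistic $U_n$ costs only a uniform $O(1/n)$ term, and that this error does not affect the \emph{local} ($a\to 0$) shape of the rate function --- equivalently, that the factor $k+1$ coming from the Hoeffding projection of $\Psi_4$ is carried through correctly. Once this is settled, the cited large deviation theorem delivers the result exactly as in the case $k=3$.
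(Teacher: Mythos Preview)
Your proposal is correct and follows essentially the same approach as the paper: verify that the kernel $\Psi_4$ is centered, bounded and non-degenerate (via the computed variance $\Delta_4^2=271/52500$), and then invoke the large deviation theorem for $U$-statistics from \cite{nikiponi}, \cite{anirban}, \cite{Niki10}, which yields the rate $a^2/\bigl(2(k+1)^2\Delta_4^2\bigr)=a^2/(50\Delta_4^2)$. If anything, your treatment is more careful than the paper's one-line justification, since you explicitly address the $O(1/n)$ discrepancy between $I_n^{(4)}$ and the genuine $U$-statistic; this is a point the paper takes for granted.
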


\subsection{Local Bahadur efficiency of $I_n^{(4)}$}

For this case  the limit in probability under $H_1$ has the following asymptotics
$$b_1(\theta) \sim 5\theta  \int_{1}^{\infty} \psi_4(s)h_1(s)ds,
$$
where again $h_1(s)=\frac{\partial}{\partial\theta}g_1(s,\theta)\mid _{\theta=0}$ and $\psi_4(s)$ is the projection from \eqref{psi_4}.
Therefore for the first Ley-Paindaveine alternative we have
\begin{gather*}
b_1(\theta) \sim 5\theta
\int_{1}^{\infty}(\frac{9}{8s^2}+\frac{3\ln{s}}{2s}-\frac1s-\frac14)
(\frac{s-2}{s^3})\frac{ds}{s^2} \sim  \frac{\theta}{12}, \quad \theta \to 0.
\end{gather*}
and the local exact slope of the sequence $I_n^{(4)}$ as $\theta \to 0$ admits the representation
$$c_1(\theta)=b^2_1(\theta)/(25\Delta_4^2) \sim \frac{5}{66}\,\theta^2,\theta
\to 0.$$

The Kullback-Leibler "distance" for this alternative was already found above, and it satisfies
$K_1(\theta) \sim \theta^2/24, \,\theta \to 0.$ Thus the local efficiency of the test is
$$e^B_1(I)=\lim_{\theta \to 0}\frac{c_1(\theta)}{2K_1(\theta)}\approx   0.930.$$

For other alternatives the calculations are similar. Omitting the details, let us gather the values of local BE for this case in the table \ref{fig: tab2}.
\begin{table}[!hhh]\centering
\caption{Local Bahadur efficiency for $I_n^{(4)}$ }
\bigskip
\begin{tabular}{|c|c|}
\hline
Alternative & Efficiency\\
\hline
Ley-Paindaveine 1 &  0.930  \\
Ley-Paindaveine 2 &   0.961 \\
log-Weibull&  0.746  \\
\hline
\end{tabular}
\label{fig: tab2}
\end{table}

In table \ref{fig: tab3} we present the efficiencies from  tables \ref{fig: tab1} and \ref{fig: tab2} gathered with maximal values
of efficiencies against presumed alternatives.
\begin{table}[!hhh]\centering
\bigskip
\caption{Comparative table of local efficiencies for statistic $I_n^{(k)}$ }
\bigskip
\begin{tabular}{|c|c|c|c|}
\hline
 & \multicolumn{3}{|c|}{Efficiency}\\
\cline{2-4}
\raisebox{1.5ex}[0cm][0cm]{Alternative}
 & $k=3$ & $k=4$ & $\max_k$ \\
\hline
Ley-Paindaveine 1 & 0.909 &  0.930 &  0.930 for $k=4$ \\
Ley-Paindaveine 2 & 0.905 &  0.961 &  0.961 for $k=4$\\
log-Weibull& 0.787 &  0.746  &  0.821 for $k=2$\\
\hline
\end{tabular}
\label{fig: tab3}
\end{table}

\section{Kolmogorov-type statistic $D_n^{(k)}$}

\bigskip

Now we consider the Kolmogorov type statistic (\ref{D_n}).
For fixed  $t$ the difference $H_n(t) - F_n(t)$
is a family of $U$-statistics with the kernels, depending on $t\geq 1:$
\begin{gather*}
\Xi_k(X_{i_1},\ldots, X_{i_k};t)= \textbf{1}(X_{(k,\{i_1,\ldots,i_k\})}/X_{(k-1,\{i_1,\ldots,i_k\})}  < t)
-\frac{1}{k}\sum_{l=1}^{k} \textbf{1}(X_l <t) .
\end{gather*}

The projection of this kernel $\xi_k(s;t)$ for fixed
$t \geq 1$ has the form:
\begin{gather*}
\xi_k(s;t) := E(\Xi_k(X_{1},\ldots, X_{k})\mid X_{k}=s)=\\=
\P(X_{(k,\{1,\ldots,k-1, s\})}/X_{(k-1,\{1,\ldots,k-1, s\})}  < t)-\frac{1}{k} \textbf{1}\{s
<t\}-\frac{k-1}{k}\P\{X_1 <t\}.
\end{gather*}

It remains to calculate the first term. For this purpose like in the previous cases, we write the de\-composition
$$\P(X_{k,\{1, \ldots, k-1, s\}}/X_{k-1,\{1, \ldots, k-1, s\}} < t) = \P_1+\P_2+\P_3,$$ where
$\P_i, i=1,2,3,$ are the initial probabilities, computed in one of the following cases:
\begin{itemize}
\item[(1)] Let the sample units take places as follows: $ X_1 < X_2< \ldots < X_{k-1} <s.$ Then the probability expresses as
\begin{multline*}
\P_1=(k-1)! \, \P(\frac{s}{X_{k-1}} < t, X_1 < X_2< \ldots < X_{k-1} < s)=\\
=(k-1)! \, \textbf{1}( s\geq t) \P(\frac{s}{t}<X_{k-1}< s, X_1 < X_2< \ldots < X_{k-1})+\\
+(k-1)! \, \textbf{1}( s < t) \P(X_1 < X_2< \ldots < X_{k-1} < s)=\\=
\textbf{1}( s\geq t) (F^{k-1}(s)-F^{k-1}(\frac{s}{t})).
\end{multline*}

\item[(2)] The sample units are $X_1 < X_2<\ldots X_{k-2}< s < X_{k-1},$ then for this case we have:
\begin{multline*}
\P_2=(k-1)! \, \P(\frac{X_{k-1}}{s} < t, X_1 < X_2<\ldots X_{k-2}< s < X_{k-1})=\\
=(k-1)! \, \P(s < X_{k-1} < st, X_1 < X_2<\ldots X_{k-2}< s)=\\=
(k-1)! \frac{F^{k-2}(s)}{(k-2)!}(F(st)-F(s))
=\frac{(k-1)}{s}(1-\frac{1}{s})^{k-2}(1-\frac{1}{t}).
\end{multline*}

\item[(3)] In the last case let $s$ be situated on $l-$th place ($1 \leq l \leq {k-2}$) in the variational series of the sample
$X_1, \ldots, X_{k-2}$. Then the required probability transforms into:
\begin{gather*}
\P_3=(k-1)! \, \P(\frac{X_{k-1}}{X_{k-2}} < t, X_1 < \ldots <s< \ldots <  X_{k-2} < X_{k-1})=\\
= (1-\frac{1}{t})C_{k-1}^{l-1}(1-F(s))^{k-j} F^{j-1}(s), \, 1 \leq l \leq {k-2} .
\end{gather*}

\end{itemize}

Combining these results we get that the first term in the projection is equal to:
\begin{multline*}
\P(X_{(k,\{1, \ldots, k-1, s\})}/X_{(k-1,\{1, \ldots, k-1, s\})} < t) =\\=
 \textbf{1}( s\geq t) (F^{k-1}(s)-F^{k-1}(\frac{s}{t}))+
 (1-\frac{1}{t})\sum_{l=1}^{k-1}C_{k-1}^{l-1}(1-F(s))^{k-j} F^{j-1}(s).
\end{multline*}

Again we can see that the last sum can be simplified as
$$\sum_{l=1}^{k-1}C_{k-1}^{l-1}(1-F(s))^{k-j} F^{j-1}(s)=1-F^{k-1}(s).$$
Thus the initial probability is equal to
\begin{gather*}
\P(X_{(k,\{1, \ldots, k-1, s\})}/X_{(k-1,\{1, \ldots, k-1, s\})} < t)=
\frac{1}{t}(F^{k-1}(s)-1) -\textbf{1}( s\geq t)F^{k-1}(\frac{s}{t}).
\end{gather*}

Hence we get the final expression for the projection of the family of kernels $\Xi(\cdot,t):$
\begin{gather}\label{xi_k}
\xi_k(s;t) =\frac{1}{t}\left((1-\frac{1}{s})^{k-1}-\frac{1}{k}\right)-
\textbf{1}( s\geq t)\left((1-\frac{t}{s})^{k-1}-\frac{1}{k}\right).
\end{gather}

It is easy to show that $E(\xi_k (X; t))=0$. After some calculations we get that
the variance of this projection under $H_{0}$ is for any $t$
\begin{multline*}
\delta^2(t)=\frac{t+1}{(2k-1)t^2}+\frac{t-1}{k^2t^2}-
\sum_{i=0}^{k-1}\frac{(-1)^{j}2(k-1)!(k-1)!}{(k+j)!(k-j-1)!}t^{j-1}+\\+
(-1)^{k+1}\frac{2(k-1)!(k-1)!}{(2k-1)!}t^{k-2}F^{2k-1}(t)-\frac{2}{k^2t}F^k(t).
\end{multline*}

\subsection{ Kolmogorov-type statistic $D_n^{(3)}$}

In the case  $k=3$ the projection of the family of kernels $\Xi_3 (X,Y, Z;t),$ namely
$\xi_3 (s;t):=E(\Xi_3 (X, Y, Z; t)\mid X=s)$ is equal to:
\begin{equation}\label{xi_3}
\xi_3 (s;t)=  \frac{1}{t}(\frac{1}{s^2}-\frac{2}{s}+\frac23)- \textbf{1}\{s \geq t\}( \frac{t^2}{s^2}-\frac{2t}{s}+\frac23).
\end{equation}

Now we calculate the variances of these projections  $\delta_3^2(t)$ under $H_{0}.$ Elementary calculations show that
\begin{equation*}
\delta_3^2(t)=\frac{1}{45t^4}(4t^3+4t^2-15t+7).
\end{equation*}

Hence our family of kernels $\Xi_3 (X,Y,Z;t)$  is non-degenerate in the sense of \cite{Niki10} and
\begin{equation*}
\delta_3^2=\sup_{ t\geq 1} \delta_3^2(t)=0.03477.
\end{equation*}
This value will be important in the sequel when calculating the large deviation asymptotics.

\begin{figure}[h!]
\begin{center}
\includegraphics[scale=0.35]{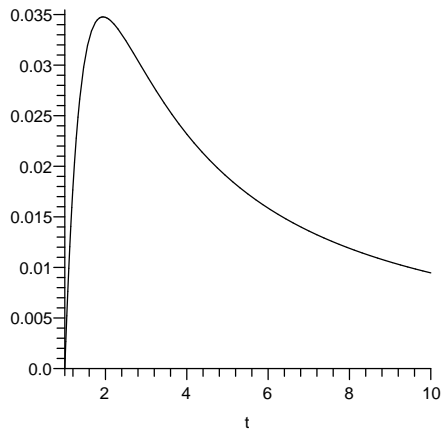}\caption{Plot of the function $\delta_3^2(t).$ }
\end{center}
\end{figure}

The limiting distribution of the statistic $D_n^{(3)}$ is unknown. Using the methods of \cite{Silv}, one can show that the
$U$-empirical process
$$\eta_n(t) =\sqrt{n} \left(H_n(t) - F_n(t)\right), \ t\geq 1,
$$
weakly converges in $D(1,\infty)$ as $n \to \infty$ to certain centered Gaussian
process $\eta(t)$ with calculable covariance. Then the sequence of statistics
$\sqrt{n} D_n^{(3)}$ converges in distribution to the rv   $\sup_{t\geq 1} |\eta(t)|$ but currently it is impossible to find explicitly its distribution.
Hence it is reasonable to determine the critical values for statistics  $D_n ^{(3)}$ by simulation.

Table \ref{fig: tab4} shows the critical values of the null distribution of $D_n^{(3)}$ for significance
levels $\alpha = 0.1, 0.05, 0.01$ and specific sample sizes $n.$ Each entry is obtained by using the Monte-Carlo simulation
methods with 10,000 replications.
\begin{table}[htbp]
\centering
\bigskip
\caption{Critical values for the statistic $D_n^{(3)}$ }
\bigskip
\centering
\begin{tabular}{c|ccc}

$n$ &  0.1 &  0.05 &  0.01   \\
  \hline
  10 & 0.333  & 0.400 & 0.558    \\
  20 & 0.254  & 0.277 & 0.331    \\
   30& 0.222  & 0.242 & 0.279   \\
  40 & 0.207  & 0.226 & 0.256    \\
   50 & 0.196  & 0.213 & 0.240     \\
   100 & 0.167 & 0.181 & 0.206     \\
\end{tabular}
\label{fig: tab4}
\end{table}

\medskip

Now we obtain the logarithmic large deviation asymptotics of the
sequence of statistics  $D_n^{(3)}$ under $H_0.$
The family of kernels $\{\Xi_3(X, Y, Z; t), t\geq 0\}$ is not only centered but bounded. Using the results from \cite{Niki10} on
large deviations for the supremum  of non-degenerate $U$-statistics, we obtain the following result.
\begin{theorem}
For  $a>0$
$$
\lim_{n\to \infty} n^{-1} \ln P (  D_n^{(3)} >a) = - f_D^{(3)}(a),
$$
where the function  $f_D^{(3)}$ is continuous for sufficiently small $a>0,$ moreover $$
f_D^{(3)}(a) = (18 \delta_3^2)^{-1} a^2(1 + o(1)) \sim 1.5978\, a^2, \, \mbox{as}
\, \, a \to 0.
$$
\end{theorem}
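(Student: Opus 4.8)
The plan is to invoke the general large deviation theorem for the supremum of a family of non-degenerate $U$-statistics (the result of \cite{Niki10}, see also \cite{nikiponi}, \cite{anirban}), exactly as was done for the integral statistic $I_n^{(3)}$, but now paying attention to the dependence on the parameter $t$. First I would recall that $D_n^{(3)}=\sup_{t\geq 1}|H_n(t)-F_n(t)|$, and that for each fixed $t$ the difference $H_n(t)-F_n(t)$ is a $U$-statistic of degree $3$ with kernel $\Xi_3(X,Y,Z;t)$ which, as verified above, is centered under $H_0$, uniformly bounded (by $1$ in absolute value), and non-degenerate in the sense of \cite{Niki10} because $\delta_3^2=\sup_{t\geq1}\delta_3^2(t)=0.03477>0$ and the supremum is attained. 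These three properties — centering, boundedness, and uniform non-degeneracy of the family — are precisely the hypotheses under which the cited theorem yields the existence of the limit $\lim_{n\to\infty} n^{-1}\ln P(D_n^{(3)}>a)=-f_D^{(3)}(a)$ with $f_D^{(3)}$ continuous in a right neighbourhood of $0$.

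Next I would extract the small-$a$ asymptotics. The general theorem gives that, as $a\to 0$, the rate function behaves like $f_D^{(3)}(a)\sim a^2/(2 k^2 \sup_t \delta_k^2(t))$, where $k=3$ is the degree of the $U$-statistic; the factor $k^2$ arises because the leading term in the Hoeffding decomposition of a degree-$k$ $U$-statistic is $k$ times the empirical mean of the projection, so its variance scales like $k^2$ times the projection variance, and the Chernoff-type bound in the Bahadur/large-deviation machinery produces the Gaussian tail $\exp(-n a^2/(2k^2\delta^2))$ to leading order. Substituting $k=3$ and $\delta_3^2=0.03477$ gives $f_D^{(3)}(a)\sim a^2/(18\,\delta_3^2) = a^2/(18\cdot 0.03477)\approx 1.5978\,a^2$, which is the stated constant. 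For the supremum statistic one must also check that the supremum over $t$ does not inflate the exponential rate beyond the worst single coordinate — this is the content of the supremum version of the theorem in \cite{Niki10}, which shows that the rate for $\sup_t$ equals the infimum over $a'\le a$ of the rates for fixed $t$, and near zero this is governed by the single value $t^*$ attaining $\sup_t\delta_3^2(t)$.

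The main obstacle, and the only genuinely non-routine point, is verifying that the family $\{\Xi_3(X,Y,Z;t):t\geq 1\}$ satisfies the regularity conditions required by the supremum large-deviation theorem of \cite{Niki10} — in particular that the kernels depend on $t$ measurably and "nicely" enough (e.g. the associated $U$-empirical process is tight, the projections $\xi_3(s;t)$ are jointly measurable and the variance function $\delta_3^2(t)$ is continuous with an attained maximum, which is visible from the closed-form expression $\delta_3^2(t)=\frac{1}{45t^4}(4t^3+4t^2-15t+7)$ and the accompanying plot). Once this is granted, everything else is a direct citation plus the elementary arithmetic $1/(18\cdot 0.03477)=1.5978\ldots$. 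I would therefore structure the proof as: (1) state the three structural properties of the kernel family (centered, bounded, uniformly non-degenerate), citing the computations of $\xi_3(s;t)$ and $\delta_3^2(t)$ above; (2) apply the theorem of \cite{Niki10} to obtain existence and continuity of $f_D^{(3)}$; (3) read off the quadratic asymptotics with constant $(18\delta_3^2)^{-1}$ and plug in the numerical value of $\delta_3^2$ to conclude.
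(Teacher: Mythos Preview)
Your proposal is correct and follows exactly the paper's own approach: the paper simply notes that the family of kernels $\{\Xi_3(X,Y,Z;t),\,t\geq 1\}$ is centered, bounded and non-degenerate, and then invokes the large-deviation theorem for suprema of non-degenerate $U$-statistics from \cite{Niki10} to obtain the result. Your write-up is in fact more detailed than the paper's (you spell out the origin of the factor $k^2=9$ and the regularity checks on $\delta_3^2(t)$), but the route is identical.
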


\bigskip

\subsection{Local efficiency of  $D_n^{(3)}$}

To evaluate the efficiency, first consider again the first Ley-Paindaveine alternative  with the d.f.
$G_1(x,\theta),\theta \geq 0, x \geq 1$ given above. By the Glivenko-Cantelli theorem
for $U$-statistics \cite{Jan} the limit in probability under the alternative for statistics $D_n^{(3)}$ is equal to
\begin{gather*}
b_1(\theta):= \sup_{t\geq 1}|b_1(t,\theta)|=
\sup_{t\geq 1}
|P_{\theta}(X_{(3,3)}/X_{(2,3)}<t)-G(t,\theta)|.
\end{gather*}
It is not difficult to show that
$$b_1(t,\theta) \sim 3\theta  \int_{1}^{\infty} \xi_3(s; t)h_1(s)ds,
$$
where again $
h_1(s)=\frac{\partial}{\partial\theta}g_1(s,\theta)\mid _{\theta=0}$ and $\xi_3(s;t)$ is the projection defined above in \eqref{xi_3}.
Hence for the first Ley-Paindaveine alternative we have for $t\geq 1:$
\begin{gather*}
b_1(t,\theta) \sim \frac{t-1}{2t^2}\theta, \quad \theta \to 0.
\end{gather*}

\begin{figure}[h!]
\begin{center}
\includegraphics[scale=0.35]{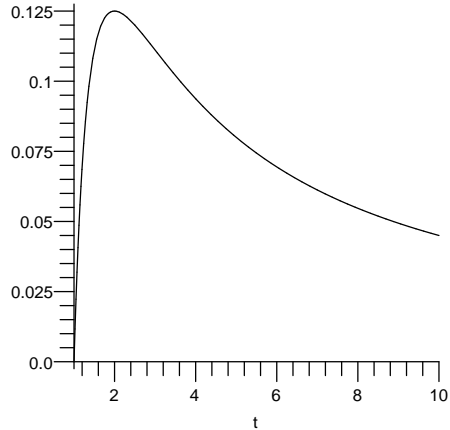}\caption{Plot of the function  $b_1(t,\theta), \mbox{ Ley-Paindaveine 1 alt.}$ }
\end{center}
\end{figure}
Thus $b_1(\theta)=\sup_{t\geq 1}|b_1(t,\theta)| \sim
 0.125\,\theta,$
and it follows that the local exact slope of the sequence of statistics
$D_n$ admits the representation:
$$c_1(\theta) \sim b^2_1(\theta)/(9\delta^2_3) \sim 0.0499\,\theta^2, \, \theta \to 0.$$
The Kullback-Leibler information in this case is  given by (\ref{K1}).
Hence the local Bahadur efficiency of our test is $e^B_1(D)= 0.599.$

Next we take the second Ley-Paindaveine distribution, where the calculations are similar, and
the local BE is equal to $0.689.$ In the case of the log-Weibull density
we find that the local BE is $0.467.$

We collect the values of local BE in the table \ref{fig: tab5}.
\begin{table}[!hhh]\centering
\caption{Local Bahadur efficiency for $D_n^{(3)}$ }
\bigskip
\begin{tabular}{|c|c|}
\hline
Alternative & Efficiency\\
\hline
Ley-Paindaveine 1 &  0.599  \\
Ley-Paindaveine 2 &   0.689 \\
log-Weibull&  0.467  \\
\hline
\end{tabular}
\label{fig: tab5}
\end{table}

\subsection{ Kolmogorov-type statistic $D_n^{(4)}$}

In the case  $k=4$ the projection of the family of kernels $\Xi_4 (X,Y, Z, W;t),$
is equal to:
\begin{gather*}\label{xi_4}
\xi_4 (s;t)=  \frac{1}{t}\left((1-\frac{1}{s})^3-\frac14\right)-
\textbf{1}\{s \geq t\}\left( -\left(\frac{t}{s}\right)^3+3\left(\frac{t}{s}\right)^2-\frac{3t}{s}+\frac34\right).
\end{gather*}

Therefore we get that the variances of these projections  $\delta_4^2(t)$ under $H_{0}$
\begin{equation*}
\delta_4^2(t)=\frac{1}{560t^5}(45t^4+45t^3-252t^2+224t-62).
\end{equation*}

Hence our family of kernels $\Xi_4 (X,Y,Z,W;t)$  is non-degenerate in the sense of \cite{Niki10} and
\begin{equation*}
\delta_4^2=\sup_{ t\geq 1} \delta_4^2(t)=0.0258.
\end{equation*}

\begin{figure}[h!]
\begin{center}
\includegraphics[scale=0.35]{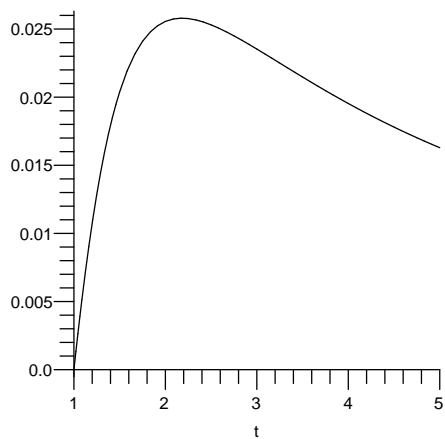}\caption{Plot of the function $\delta_4^2(t).$ }
\end{center}
\end{figure}

The limiting distribution of the statistic $D_n^{(4)}$ is unknown as in the previous section.
Using the Monte-Carlo methods, we again present the critical values of the null distribution for statistics
$D_n ^{(4)}$ for significance levels $\alpha = 0.1, 0.05, 0.01$ with 10,000 replications in the next table \ref{fig: tab6}.
\begin{table}[htbp]
\centering
\bigskip
\caption{Critical values for the statistic $D_n^{(4)}$ }
\bigskip
\centering
\begin{tabular}{c|ccc}

$n$ &  0.1 &  0.05 &  0.01   \\
  \hline
   10    & 0.400  & 0.433 & 0.600\\
  20 & 0.331  & 0.355 & 0.399\\
   30    & 0.304  & 0.328 & 0.362\\
  40     & 0.287  & 0.307 & 0.345\\
   50   & 0.276  & 0.295 & 0.328  \\
   100     & 0.244  & 0.260 & 0.285 \\
\end{tabular}
\label{fig: tab6}
\end{table}

\medskip

The logarithmic large deviation asymptotics of the
sequence of statistics  $D_n^{(4)}$ under $H_0$ is showed in the next theorem.
\begin{theorem}
For  $a>0$
$$
\lim_{n\to \infty} n^{-1} \ln P (  D_n^{(4)} >a) = - f_D^{(4)}(a),
$$
where the function  $f_D^{(4)}$ is continuous for sufficiently small $a>0,$ moreover $$
f_D^{(4)}(a) =(32 \, \delta_4^2)^{-1} a^2(1 + o(1)) \sim 1.211\, a^2, \, \mbox{as}
\, \, a \to 0.
$$
\end{theorem}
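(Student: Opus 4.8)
The plan is to apply the general large-deviation theorem for suprema of families of non-degenerate $U$-statistics (the result of \cite{Niki10} already invoked for $D_n^{(3)}$), treating $D_n^{(4)}=\sup_{t\geq 1}|H_n(t)-F_n(t)|$ as the supremum over $t$ of the $U$-statistic of degree $k=4$ with the centered kernel family $\{\Xi_4(X,Y,Z,W;t)\}$. To invoke that theorem we must verify its three hypotheses for $\Xi_4$: (i) the kernel is centered under $H_0$, i.e. $E\,\Xi_4(X,Y,Z,W;t)=0$ for every $t$; (ii) it is uniformly bounded; and (iii) it is non-degenerate, meaning $\delta_4^2=\sup_{t\geq1}\delta_4^2(t)>0$, where $\delta_4^2(t)=\operatorname{Var}\xi_4(X;t)$. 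Item (ii) is immediate since $\Xi_4$ is a difference of indicator-type averages, hence bounded by $1$ in absolute value. Item (i) has been noted in the text ($E(\xi_4(X;t))=0$), and item (iii) is exactly the statement, established just above, that $\delta_4^2=\sup_{t\geq1}\delta_4^2(t)=0.0258>0$, using the explicit formula $\delta_4^2(t)=\frac{1}{560t^5}(45t^4+45t^3-252t^2+224t-62)$ derived from the projection $\xi_4(s;t)$.

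Once these hypotheses are in place, the theorem of \cite{Niki10} yields the existence of the limit
$\lim_{n\to\infty} n^{-1}\ln P_{H_0}(D_n^{(4)}>a) = -f_D^{(4)}(a)$
for $a>0$, together with the continuity of $f_D^{(4)}$ in a right neighbourhood of $0$ and the local quadratic expansion
$f_D^{(4)}(a) = a^2\big(1+o(1)\big)/(2k^2\,\delta_4^2)$ as $a\to0$. Here the degree of the $U$-statistic is $k=4$, so $2k^2=32$, giving the stated coefficient $(32\,\delta_4^2)^{-1}$; substituting the numerical value $\delta_4^2=0.0258$ produces $(32\cdot 0.0258)^{-1}\approx 1.211$, which is the constant appearing in the theorem. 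This mirrors verbatim the derivation already carried out for $D_n^{(3)}$, where $2k^2=18$ and $\delta_3^2=0.03477$ gave $f_D^{(3)}(a)\sim 1.5978\,a^2$.

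The only genuine work, and the main obstacle, is the verification of non-degeneracy, i.e. confirming that $\delta_4^2(t)$ is strictly positive and locating its supremum over $t\geq1$. This requires computing the second moment $E\,\xi_4^2(X;t)$ under $F(x)=1-x^{-1}$ by splitting the integral $\int_1^\infty \xi_4^2(s;t)\,s^{-2}\,ds$ at $s=t$, using the two branches of $\xi_4(s;t)$ from \eqref{xi_4}; expanding the cubes and integrating term by term yields the rational function in $t$ displayed above, after which one checks that the quartic $45t^4+45t^3-252t^2+224t-62$ stays positive for $t\geq1$ and maximizes $\delta_4^2(t)$ at an interior point, giving $\delta_4^2\approx 0.0258$ (consistent with the plotted graph). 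These are the routine but somewhat lengthy computations that underlie the theorem; everything else is a direct citation of \cite{Niki10}.
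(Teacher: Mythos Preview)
Your proposal is correct and follows exactly the paper's approach: the paper does not give a separate proof for this theorem but simply relies on the verification (carried out just before) that the kernel family $\{\Xi_4(\cdot;t)\}$ is centered, bounded, and non-degenerate with $\delta_4^2=\sup_{t\geq 1}\delta_4^2(t)=0.0258$, and then cites the large-deviation theorem of \cite{Niki10}, precisely as was done for $D_n^{(3)}$. Your identification of the coefficient $(2k^2\delta_4^2)^{-1}=(32\,\delta_4^2)^{-1}\approx 1.211$ is the same computation.
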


\bigskip

\subsection{Local efficiency of  $D_n^{(4)}$}

In table \ref{fig: tab7} we collect the calculated efficiencies for statistic $D_n^{(k)}$ joined with results from
table \ref{fig: tab5} and with the maximal values of efficiencies against our alternatives.
\begin{table}[!hhh]\centering
\bigskip
\caption{Comparative table of local efficiencies for statistic $D_n^{(k)}$ }
\bigskip
\begin{tabular}{|c|c|c|c|}
\hline
 & \multicolumn{3}{|c|}{Efficiency}\\
\cline{2-4}
\raisebox{1.5ex}[0cm][0cm]{Alternative}
 & $k=3$ & $k=4$ & $\max_k$ \\
\hline
Ley-Paindaveine 1 & 0.599 &  0.654 &  0.674 for $k=6$ \\
Ley-Paindaveine 2 & 0.689 &  0.767 &  0.790 for $k=5$\\
log-Weibull& 0.467 &  0.472  &  0.472 for $k=4$\\
\hline
\end{tabular}
\label{fig: tab7}
\end{table}

We observe that the efficiencies for the Kolmogorov-type test are lower than for the integral test. However,
it is the usual situation when testing goodness-of-fit \cite{Nik}, \cite{Rank}, \cite{Niki10}.

\bigskip
\section{Conditions of local asymptotic optimality}
\bigskip

In this section we are interested in conditions of local asymptotic optimality (LAO) in Bahadur sense
for both sequences of statistics $I_n^{(k)}$  and $D_n^{(k)}.$ This means to describe the local structure of the
alternatives for which the given statistic has maximal potential local efficiency so that the relation
$$
c_T(\theta) \sim 2 K(\theta),\, \theta \to 0,
$$
holds, see \cite{Nik}, \cite{NikTchir}.
Such alternatives form the domain of LAO for the given sequence of statistics.

Consider the functions
\begin{gather*}
H(x)=G^{'}_{\theta}(x,\theta)\mid
_{\theta=0},\quad
h(x)=g^{'}_{\theta} (x,\theta)\mid _{\theta=0}.
\end{gather*}
We will assume that the following regularity conditions are true, see also \cite{NikTchir}:
\begin{gather}
\int_{1}^{\infty} h^2(x)x \, dx <  \infty  \quad  \mbox{where} \quad  h(x)=H'(x), \, \label{PR1} \\
 \frac{\partial}{\partial\theta}\int_{1}^{\infty} g(x,\theta) \ln{x} \, dx \mid
_{\theta=0} \ = \ \int_{1}^{\infty} h(x)\ln{x} \, dx .  \label{PR2}
\end{gather}
Denote by $\cal G$ the class of densities  $ g(x,\theta)$  with d.f.'s $G(x,\theta),$ satisfying the regularity conditions (\ref{PR1}) - (\ref{PR2}).
We are going to deduce the LAO conditions in terms of the function $h(x).$

Recall that for alternative densities from  $\cal G$  the following asymptotics is valid:
\begin{gather*}
2K(\theta)\sim \theta^2 \left[ \int_{1}^{\infty} h^2(x)x\, dx -(\int_{1}^{\infty}  h(x)\ln{x} \, dx)^2\right],\, \theta \to 0.
\end{gather*}

\subsection{LAO conditions for $I_n^{(k)}$}

First consider the integral statistic  $I_n^{(k)}$  with the kernel
$\Psi_k(X_1, \ldots , X_{k+1})$ and its projection $\psi_k(x)$ from \eqref{psi_k}.
Let introduce the auxiliary function
$$
h_0(x) = h(x) - \frac{(\ln{x}-1)}{x^2}\int_1^\infty  h(u)\ln{u}\, du.
$$
Simple calculations show that
\begin{gather*}
\medskip
\int_{1}^{\infty} h^2(x)x^2dx -\left(\int_{1}^{\infty} h(x)\ln{x} \, dx\right)^2=\int_{1}^{\infty} h_0^2(x) x^2 dx,\\
\int_{1}^{\infty} \psi_k(x)h(x)dx = \int_{1}^{\infty} \psi_k(x)h_0(x)dx.
\end{gather*}

Hence the local asymptotic efficiency takes the form
\begin{multline*}
e^B (I_n^{(k)})= \lim_{\theta \to 0} b_I^2(\theta) / \left((k+1)^2\Delta_k^2 \cdot 2K(\theta)\right) =\\
= \left(\int_{1}^{\infty} \psi_k(x)h_0(x)dx\right)^2/\left(
\int_{1}^{\infty}\psi_k^2(x) \frac{dx}{x^2} \cdot   \int_{1}^{\infty} h_0^2(x)x^2 dx
 \right).
\end{multline*}

By Cauchy-Schwarz inequality
we obtain that  the expression in the right-hand side is equal to 1 iff
 $h_0(x)=C_1\psi_k(x)\frac{1}{x^2}$  for some constant $C_1>0,$
so that
\begin{equation}
h(x) =(C_1\psi_k(x)+ C_2 (\ln{x}-1))\frac{1}{x^2} \quad  \text{ for some constants } C_1>0 \text{ and } C_2.
\end{equation}
The set of distributions
for which the function $h(x)$ has such form generate the domain of LAO in the class $\cal G$.
The simplest examples of
such alternatives density  $g(x,\theta)$  for small $\theta > 0$ is given by the table \ref{fig: tab8}.
\begin{table}[!hhh]\centering
\medskip
\caption{Examples of LAO alternative density $g(x,\theta)$ for statistic $I_n^{(k)}$ }
\bigskip
\begin{tabular}{c|l}
 & Alternative density  $g(x, \theta)$ as $\theta \to +0, \, x \geq 1$\\[2mm]\hline
$k=3$ &  $g(x,\theta)=\frac{1}{x^2}(1+\theta\left( \frac{9}{8x^2}+\frac{3\ln{x}}{2x}-\frac1x-\frac14\right))$\\[2mm]\hline
$k=4$& $g(x,\theta)=\frac{1}{x^2}(1+\theta\left(\frac{12\ln{s}}{5s}-\frac{4}{5s^3}+\frac{18}{5s^2}-\frac{13}{5s}-\frac{3}{10}\right))$\\[2mm]\hline
\end{tabular}
\label{fig: tab8}
\end{table}

\subsection{LAO conditions for $D_n^{(k)}$}

Now let consider the Kolmogorov type statistic  $D_n^{(k)}$  with the family of kernels  $\Xi_k$ and their
projections $\xi_k(x;t)$ from \eqref{xi_k}. After simple calculations we get
\begin{gather*}
\int_{1}^{\infty} \xi_k(x; t)h(x)dx = \int_{1}^{\infty} \xi_k(x; t )h_0(x)dx, \quad  \forall t \in [1,\infty).
\end{gather*}

Hence the local efficiency takes the form
$$
e^B (D_n^{(k)})= \lim_{\theta \to 0} \left [ b_D^2(\theta)/ \sup_{t \geq 1}\left(
k^2 \delta_k^2(t)\right)\cdot 2K(\theta) \right ] = \frac{ \sup_{t \geq 1}\bigg(
\int_{1}^{\infty}  \xi_k(x;t)h_0(x)dx\bigg)^2 }{ \ \sup_{t \geq 1} \bigg(
\int_{1}^{\infty} \xi_k^2 (x;t) \frac{dx}{x^2} \cdot \int_{1}^{\infty} h_0^2(x)x^2 dx\bigg)} \leq 1.
$$

We can apply once again  the Cauchy-Schwarz inequality to the numerator in the last ratio. It follows that the
sequence of statistics $D_n$ is locally asymptotically optimal, and $e^B (D_n^{(k)})=1$
iff
\begin{equation*}\label{tt}
h(x)=(C_3\xi_k(x; t_0)+ C_4 (\ln{x}-1))\cdot \frac{1}{x^2} \quad \text{ for }
t_0= \arg \sup_{t \geq 1}\delta_k^2(t)\,
\end{equation*}
 and some constants $C_3>0 $ and $C_4.$

The distributions with such  $h(x)$ form the domain of LAO in the class  $\cal G$.
The simplest examples are given in the table \ref{fig: tab9}.
\begin{table}[!hhh]\centering
\bigskip
\caption{Examples of LAO alternative density $g(x,\theta)$ for statistic $D_n^{(k)}$ }
\bigskip
\begin{tabular}{c|l}
 & Alternative densities $g(x, \theta)$ as $\theta \to +0, \, x \geq 1$\\[2mm]\hline
$k=3$& $ g(x,\theta)=\frac{1}{x^2}\left(1+\theta \left(\frac{1}{t_1}(\frac{1}{x^2}-\frac{2}{x}+\frac23)- \textbf{1}\{x \geq t_1\}( (\frac{t_1}{x})^2-\frac{2t_1}{x}+\frac23)\right)\right)$\\[2mm]
& \quad \quad \quad \quad $t_1 = \arg \max_{t\geq 1}\left( \frac{1}{45t^4}(4t^3+4t^2-15t+7)\right)\approx 1.9395 $ \\[2mm]\hline
$k=4$ & $g(x,\theta)=\frac{1}{x^2}\left(1+\theta
\left(\frac{1}{t_2}((1-\frac{1}{x})^3-\frac14)- \textbf{1}\{x \geq t_2\}( -(\frac{t_2}{x})^3+3(\frac{t_2}{x})^2-\frac{3t_2}{x}+\frac34)\right)\right)$\\[2mm]
&\quad \quad \quad \quad $t_2= \arg \max_{t\geq 1}\left(
\frac{1}{560t^5}(45t^4+45t^3-252t^2+224t-62)\right)\approx 2.1810$\\[2mm]\hline
\end{tabular}
\label{fig: tab9}
\end{table}

\section{Conclusion}
We constructed two new tests for goodness-of-fit testing for Pareto distribution based on the new characterization
for the Pareto distribution. We describe their limit distribution and large deviations. The Bahadur efficiency
for some alternatives has been obtained and it turned
out reasonably high. Also we derived the conditions of local optimality for our tests.

\end{document}